\newtheorem{theorem}{Theorem}[section]
\newtheorem{lemma}[theorem]{Lemma}
\newtheorem{proposition}[theorem]{Proposition}
\theoremstyle{definition}
\newtheorem{question}[theorem]{Question}
\newcommand{\R}{\mathbb{R}}
\newcommand{\lw}[1]{\smash{\lower0.6ex\hbox{#1}}}
\begin{document}


\title[Crossing changes, Delta moves, and sharp moves on welded knots]
{Crossing changes, Delta moves, and sharp moves on welded knots} 


\author{Shin SATOH}
 
\address{Department of Mathematics, Kobe University, 
Rokkodai-cho 1-1, Nada-ku, Kobe 657-8501, Japan}

\email{shin@math.kobe-u.ac.jp}

Dedicated to Professor Yasutaka Nakanishi on the occasion of his 60th birthday


\renewcommand{\thefootnote}{\fnsymbol{footnote}}
\footnote[0]{2010 {\it Mathematics Subject Classification}. 
57M25.}  



\keywords{welded knot, virtual knot, unknotting operation, 
crossing change, Delta move, sharp move, pass move.} 


\maketitle


\begin{abstract} 
We prove that the crossing changes, Delta moves, and sharp moves 
are unknotting operations on welded knots. 
\end{abstract}


\section{Introduction}\label{sec1}

The virtual knots \cite{Kau3} and welded knots \cite{FRR} 
are two extensions of classical knots in the Euclidian $3$-space. 
In classical knot theory, 
invariants and local moves play important roles 
from the algebraic and geometric viewpoints. 
Several algebraic invariants of classical knots are extended to 
those of virtual or welded knots. 
For example, the Jones polynomial is an invariant of a virtual knot 
but not that of a welded knot, 
and the knot group and knot quandle are invariants of 
a virtual knot and a welded knot both. 
As for local moves on virtual knots, 
there are many results in relation to finite type invariants. 
In particular, a replacement of a classical crossing with a virtual crossing 
is used in \cite{GPV}. 

In this paper, we consider three kinds of classical local moves 
called the {\it crossing change}, the {\it Delta move}, and the {\it sharp move} 
as shown in Figure~\ref{fig001}. 
These local moves are known as unknotting operations for classical knots \cite{Mur, MN}. 
On the other hand, the crossing change on a virtual knot 
is not an unknotting operation; 
for example, the value of the Miyazawa (or arrow) polynomial \cite{DK,Miy}
at $A=1$ detects the non-triviality of a virtual knot up to crossing changes. 
Since a Delta move and a sharp move are presented by 
crossing changes, 
neither of the moves is an unknotting operation for virtual knots 
(cf.~\cite{ST}). 
The aim of this paper is to prove that the three local moves 
are unknotting operations for welded knots 
in the following sense.

\begin{figure}[htb]
\begin{center}
\includegraphics[bb=0 0 346 51]{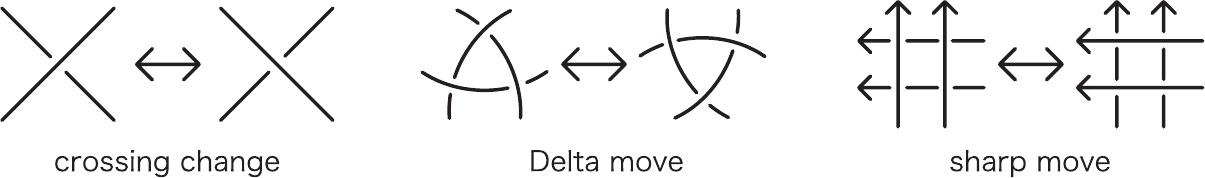}
\caption{}
\label{fig001}
\end{center}
\end{figure}

\begin{theorem}\label{thm11}
For any diagram $D$ of a welded knot $K$, 
there is a diagram $D'$ such that 
\begin{itemize}
\item[{\rm (i)}] 
$D'$ is obtained from by the crossing changes at some classical crossings, and 
\item[{\rm (ii)}] 
$D'$ presents the trivial welded knot. 
\end{itemize}
\end{theorem}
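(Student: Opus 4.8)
The plan is to work with Gauss diagrams. Recall that a welded knot is the same thing as a Gauss diagram---a circle carrying finitely many signed, oriented chords, one chord for each classical crossing, oriented from the over-strand to the under-strand at that crossing---taken up to the three Reidemeister moves together with the forbidden ``over-commutation'' move; on Gauss diagrams the latter permits interchanging two chord \emph{tails} that are adjacent on the circle, while the corresponding move for two \emph{heads} is precisely the one already illegal for virtual knots. In this language a crossing change at a classical crossing is exactly the reversal of the associated chord (it also flips the sign of that chord, but signs will play no role below). Thus the content of the theorem is: by reversing some of the chords, every Gauss diagram can be carried, using Reidemeister moves and over-commutations, to the empty diagram.

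First I would normalize $D$. Fix a base point on the circle away from the chord endpoints, cut there, and read the $2n$ endpoints off in order; for each chord whose head is read before its tail, perform a crossing change at the corresponding classical crossing of $D$. In the resulting diagram $D'$ every chord has its tail before its head; call such a Gauss diagram \emph{descending}. The remaining, and main, step is to show that a descending Gauss diagram presents the trivial welded knot, \emph{using no further crossing changes}. I would prove this by induction on the number $n$ of chords, and in fact only the over-commutation move and the first Reidemeister move are needed---no signs, and no second or third Reidemeister moves. If $n\ge 1$, let $c$ be the chord whose head is leftmost among all heads; then every endpoint lying to the left of that head, in particular every endpoint strictly between the tail and the head of $c$, is a tail. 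Over-commutations therefore let me slide the tail of $c$ rightwards, transposition by transposition, past each of those tails, until the tail and head of $c$ become adjacent on the circle; the other chords remain descending throughout, and a first Reidemeister move now deletes $c$. The result is a descending Gauss diagram with $n-1$ chords, so the induction closes, the base case $n=0$ being the empty diagram. Translating back, $D'$ presents the trivial welded knot.

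The step I expect to demand the most care is the dictionary itself: making precise which elementary transposition of Gauss-diagram endpoints realizes the welded forbidden move, and verifying that this move and the first Reidemeister move are genuinely insensitive to chord signs and orientations in the ways the argument exploits. (If one's chord convention makes the legal transposition the one exchanging adjacent heads rather than tails, the argument is simply mirrored: make $D'$ ascending and induct on the chord with the rightmost tail.) The other thing to be deliberate about is the order of operations: all the crossing changes are fixed in advance---precisely those that make $D'$ descending---and the subsequent reduction to the empty diagram uses none of them; passing to a descending diagram up front is exactly what prevents later chord-deletions from requiring switches that could interfere with earlier ones. With those points settled the induction is routine, and the whole argument transfers verbatim to welded diagrams, where ``descending'' acquires its usual meaning: traversing $K$ from the base point, one meets each classical crossing first along its over-strand.
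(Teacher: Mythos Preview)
Your proposal is correct and takes essentially the same approach as the paper: make the diagram descending by crossing changes, then repeatedly select the chord whose head (under-preimage) is met first, slide its tail past the intervening tails via the W move, and delete it by C1. The paper merely packages the deletion step as a separate lemma (Lemma~2.1, replacing a classical crossing by a welded one when one of the two arcs it cuts out carries no under-crossings) before iterating it in Proposition~2.2, but the underlying Gauss-diagram manipulation is identical to yours.
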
 

\begin{theorem}\label{thm12}
For any welded knot $K$, 
there is a finite sequence of welded knots 
$K=K_0,K_1,\dots,K_n$ such that 
\begin{itemize}
\item[{\rm (i)}] 
$K_i$ is obtained from $K_{i-1}$ by a Delta move 
$(i=1,2,\dots,n)$, and 
\item[{\rm (ii)}] 
$K_n$ is the trivial welded knot. 
\end{itemize}
\end{theorem}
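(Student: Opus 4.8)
The plan is to deduce Theorem~\ref{thm12} from Theorem~\ref{thm11}. By Theorem~\ref{thm11}, any diagram $D$ of $K$ can be transformed into a diagram of the trivial welded knot by finitely many crossing changes at classical crossings. Hence it suffices to establish the following realization lemma: a single crossing change at a classical crossing of a welded knot diagram can be achieved, up to welded equivalence (generalized Reidemeister moves together with the forbidden move permitted in the welded category), by a finite sequence of Delta moves. Granting this, I replace each crossing change in the unknotting sequence furnished by Theorem~\ref{thm11} by its corresponding sequence of Delta moves and treat the intervening welded equivalences as identifications of welded knots rather than as steps; the result is a sequence of welded knots $K=K_0,K_1,\dots,K_n$ in which $K_i$ is obtained from $K_{i-1}$ by a Delta move and $K_n$ is trivial.

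To prove the realization lemma, let $C$ be a classical crossing of a diagram $D$, with local strands $\alpha$ and $\beta$. Since $K$ is a knot, $C$ is a self-crossing, so $\alpha$ and $\beta$ are joined along $K$ by an arc disjoint from a neighborhood of $C$. Following the classical argument of Murakami--Nakanishi \cite{MN}, I bring an auxiliary third strand $\delta$ alongside $\alpha$ and $\beta$ near $C$---either by pushing a finger of the rest of the knot into place, or more economically by creating a small kink next to $C$ with a Reidemeister~I move and clasping it around $\alpha$ and $\beta$ by Reidemeister~II moves---and then apply a Delta move (followed, if necessary, by a second Delta move) to the triple $(\alpha,\beta,\delta)$ whose net effect on $\alpha\cup\beta$ is precisely the crossing change at $C$, the auxiliary crossings created with $\delta$ being removed when $\delta$ is withdrawn. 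The routing of $\delta$ into and out of position is carried out by detour moves on its purely virtual subarcs and by the permitted forbidden move, which suffice because $\delta$ can be arranged to pass over the rest of the diagram. When $D$ has too few classical crossings to supply a third strand---in particular when $D$ has at most one classical crossing---then $D$ and the diagram obtained by changing $C$ both present the trivial welded knot, and no Delta move is required.

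The main obstacle is the verification, by explicit diagram chasing, that the Delta-move maneuver at $C$ really does implement a pure crossing change in the welded category, and in particular that the auxiliary strand $\delta$ can always be routed to and from $C$ using only welded-legal moves. The delicate point is that only one of the two forbidden moves is available, so $\delta$ cannot be slid freely past virtual crossings in both directions; this is handled by arranging $\delta$ to be everywhere an overstrand during its routing and by confining the portion of the maneuver that forces $\delta$ under the other strands to the small classical region around $C$ where the Delta move takes place. Once this local analysis is in place, the reduction described above completes the proof.
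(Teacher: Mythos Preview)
Your reduction is exactly the paper's: Theorem~\ref{thm12} is deduced from Theorem~\ref{thm11} together with a realization lemma asserting that a single crossing change on a welded knot diagram can be effected by Reidemeister moves and Delta moves. That lemma is the paper's Proposition~\ref{prop31}, and it is where all of the work lies.

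Your proposed proof of the realization lemma, however, has a genuine gap. The ``more economical'' option---creating $\delta$ by a Reidemeister~I kink adjacent to $C$, clasping it around $\alpha$ and $\beta$ by Reidemeister~II, applying one or two Delta moves, and then withdrawing $\delta$---is a purely local procedure in a disk meeting the diagram only in $\alpha$ and $\beta$. But no such local sequence can realize a crossing change: Reidemeister moves, the welded move~W, and Delta moves all preserve pairwise linking numbers of welded links, whereas a crossing change between strands of distinct components shifts the linking number by $\pm 1$. Since a local argument cannot detect whether $\alpha$ and $\beta$ lie on the same component, if it produced a crossing change for knots it would equally do so for links, a contradiction. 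Thus the global connectivity of $K$ must enter essentially, not merely be noted; your other option (routing a finger of $K$ to $C$) is in the right spirit, but you leave both the routing and the actual Delta computation at $C$ unresolved.

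The paper's argument for Proposition~\ref{prop31} is organized quite differently and makes the global input explicit. It rewrites $D$ as a band sum of the crossing-changed diagram $E$ with a Hopf link $H$, slides the attaching arcs of the two bands along $E$ until they are adjacent, and then invokes a lemma of Taniyama--Yasuhara (Lemma~\ref{lem32}): with $H$ carried along, a banded C1 move and a crossing change between $E$ and a band (or between bands) are each realized by classical Reidemeister moves and a single Delta move. This lets one make the pair of bands descending. Only then does the welded structure enter: banded W-moves (via the Gauss-diagram trick of Lemma~\ref{lem21}) replace the classical band self-intersections by welded ones and finally unlink $H$, after which the bands are removed by C2 and V2. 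So the Delta moves act on the bands, and the welded moves act afterward to dissolve $H$; this separation is what makes the argument go through.
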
 

\begin{theorem}\label{thm13}
For any welded knot $K$, 
there is a finite sequence of welded knots 
$K=K_0,K_1,\dots,K_n$ such that 
\begin{itemize}
\item[{\rm (i)}] 
$K_i$ is obtained from $K_{i-1}$ by a sharp move 
$(i=1,2,\dots,n)$, and 
\item[{\rm (ii)}] 
$K_n$ is the trivial welded knot. 
\end{itemize}
\end{theorem}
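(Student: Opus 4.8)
The plan is to deduce Theorem~\ref{thm13} from Theorem~\ref{thm11} by showing that, on welded knots, a single crossing change can be replaced by sharp moves. Concretely, I would isolate the following \emph{local realization lemma}: for a welded knot diagram $D$ and a classical crossing $c$ of $D$, the diagram obtained from $D$ by switching the over/under information at $c$ is related to $D$ by a finite sequence of sharp moves and (welded) Reidemeister moves. Granting the lemma, Theorem~\ref{thm13} is immediate: start from any diagram $D$ of $K$, apply Theorem~\ref{thm11} to obtain a diagram $D'$ of the trivial welded knot reached from $D$ by crossing changes at classical crossings, and then expand each such crossing change into the sharp-move sequence provided by the lemma; this exhibits a finite sequence $K=K_0,K_1,\dots,K_n$ of welded knots with $K_i$ obtained from $K_{i-1}$ by a sharp move and $K_n$ trivial.

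For the lemma, fix the crossing $c$, with over-arc $a$ and under-arc $b$. A sharp move is a local move on a two-string tangle, so to use it near $c$ one first has to manufacture a second pair of strands there. The idea is to borrow a thin ``companion band'' from an arc of $K$ lying away from $c$ and route it, using Reidemeister moves together with the over forbidden move, so that it runs parallel to $b$ through a neighborhood of $c$ while meeting every classical strand --- including itself --- only in virtual crossings. In this position one sharp move applied to $b$ and its companion switches $c$; retracting the companion band afterwards by detour moves and Reidemeister~I and~II moves returns $D$ with the crossing at $c$ reversed and nothing else changed. Everything here except the sharp move uses only welded moves, so the whole sequence is legitimate.

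The main obstacle is the last sentence of that paragraph: one must check, by an explicit diagram chase, that the sharp move can be positioned so that after the companion band is withdrawn its \emph{only} net effect is the single crossing change at $c$, with the auxiliary crossings produced along the band all cancelling. This is a finite verification, and it is exactly the place where the welded hypothesis does real work --- the over forbidden move is what allows the companion band to be threaded in and pulled back out without leaving surviving classical crossings, just as it is what makes ascending diagrams trivial in the proof of Theorem~\ref{thm11}. Should this direct argument turn out to be awkward, an equivalent route is to reduce to Theorem~\ref{thm12} instead, by realizing one Delta move as a composition of sharp moves (and classical Reidemeister moves) --- a purely classical identity --- and then invoking Theorem~\ref{thm12}. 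I expect the reduction to Theorem~\ref{thm11} to be the more transparent of the two, but either completes the proof.
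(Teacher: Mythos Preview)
Your overall reduction is exactly the paper's: Theorem~\ref{thm13} follows from Theorem~\ref{thm11} once one shows that a single crossing change on a welded knot diagram is realized by sharp moves and Reidemeister moves. This is the paper's Proposition~\ref{prop41}, and your final deduction (apply Theorem~\ref{thm11}, then expand each crossing change into sharp moves) is verbatim the paper's proof of Theorem~\ref{thm13}.

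The gap is in your argument for the lemma itself. A sharp move acts on a four-strand tangle with a prescribed pattern of four classical crossings; your companion band, as you describe it, meets every strand only in \emph{virtual} crossings, so near $c$ there is still just the single classical crossing and no sharp-move pattern to apply. If you instead let the companion make classical crossings with $a$ near $c$, you owe an explicit picture showing which sharp move appears and why its net effect after retraction is precisely the switch at $c$; you flag this as ``the main obstacle'' but leave it unverified, and I do not see a configuration in which one sharp move does the job. The paper's mechanism is quite different: it writes $D$ as a band sum of $E$ (the crossing-changed diagram) with a Hopf link $H$, slides the band feet together along $E$, and then makes the bands descending by performing crossing changes between $E$ and the bands and between the bands themselves. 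These banded crossing changes, together with the banded C1 moves needed afterwards, are what the sharp moves actually accomplish, via the classical pass, $t_4$, $\overline{t_4}$, and $\Gamma$ identities (Lemma~\ref{lem42}); once the bands are descending, the welded move W is used as in Lemma~\ref{lem21} to unlink $H$. So the welded hypothesis enters where you anticipated, but the sharp moves are spent on banded crossing changes rather than on a direct local switch at $c$.

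Your fallback of realizing a Delta move by sharp moves and then quoting Theorem~\ref{thm12} is not the ``purely classical identity'' you make it sound; such a local tangle replacement is not a standard off-the-shelf fact and would itself require a diagrammatic proof of comparable weight.
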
 

This paper is organized as follows. 
In Section~\ref{sec2}, 
we prove that any descending diagram 
presents a trivial welded knot, 
which induces Theorem~\ref{thm11}. 
In Sections~\ref{sec3} and \ref{sec4}, 
we prove that a replacement of a classical crossing with a welded crossing 
is accomplished by Delta moves and sharp moves, respectively, 
which induces Theorems~\ref{thm12} and \ref{thm13}.


\section{Crossing changes}\label{sec2} 

A {\it welded knot diagram} 
is a circle immersed in the plane ${\R}^2$ 
with transverse double points which are divided into two classes called 
{\it classical crossings} and {\it welded crossings}. 
A classical crossing has over/under-information 
such that a small segment is removed from one of the paths 
intersecting at the crossing, 
and a welded crossing is indicated by putting a small circle on it. 
See Figure~\ref{fig002}. 
A welded knot diagram is called {\it trivial} 
if it has no classical and welded crossings, 
that is, it is an embedding of a circle in ${\R}^2$.

\begin{figure}[htb]
\begin{center}
\includegraphics[bb=0 0 173 48]{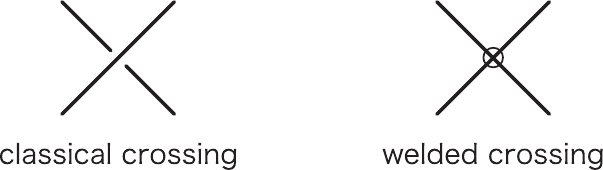}
\caption{}
\label{fig002}
\end{center}
\end{figure}

We consider eight kinds of local moves on welded knot diagrams called {\it Reidemeister moves} 
as shown in Figure~\ref{fig003}. 
The first three moves C1--C3 are classical Reidemeister moves. 
The next three moves V1--V3 are obtained from C1--C3 by replacing all the classical crossings 
with welded ones. 
The moves V4 and W are also obtained from C3 by replacing two or one classical crossing(s) 
with welded one(s), respectively. 
Here, the arc with two welded crossings passes the classical crossing for V4, 
and the arc with two classical crossings passes over (not under) the welded crossing for W.

\begin{figure}[htb]
\begin{center}
\includegraphics[bb=0 0 343 123]{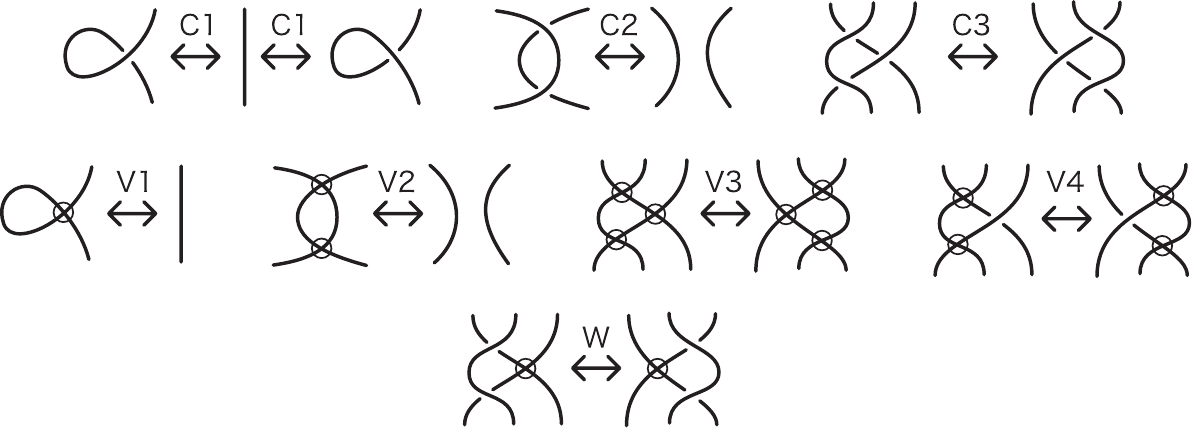}
\caption{}
\label{fig003}
\end{center}
\end{figure}

We say that two welded knot diagrams $D$ and $D'$ are {\it equivalent} 
if there is a finite sequence of welded knot diagrams 
$D=D_0,D_1,\dots,D_n=D'$ 
such that $D_i$ is obtained from $D_{i-1}$ 
by performing a Reidemeister move C1--C3, V1--V4, or W on $D_{i-1}$ 
$(i=1,2,\dots,n)$. 
A {\it welded knot} is an equivalence class of welded knot diagrams 
under these Reidemeister moves. 
A welded knot is called {\it trivial} if it is presented by 
a trivial diagram.

A {\it Gauss diagram} is the union of a circle 
and $n$ signed and oriented chords for some $n\geq 0$ 
connecting $n$ pairs of points on the circle. 
Let $D$ be a welded knot diagram with $n$ classical crossings. 
The Gauss diagram $G=G(D)$ associated with $D$ 
is defined to the the union of a circle covering $D$ 
and $n$ chords connecting the preimages of classical crossings. 
Each chord has the sign derived from that of the classical crossing, 
and is oriented from the over-crossing to the under-crossing.

We consider four kinds of moves on Gauss diagrams 
corresponding to moves C1, C2, C3, and W on welded knot diagrams. 
We use the same notations to indicate the moves on Gauss diagrams 
as those on welded knot diagrams. 
The left and middle of Figure~\ref{fig004} 
show C1 and W on a Gauss diagram, respectively, 
where $\varepsilon$ and $\varepsilon'$ are any signs. 
A move C1 removes or adds a chord 
whose endpoints are adjacent to each other. 
Such a chord is called {\it trivial}. 
Also, a move W changes the positions of adjacent initial endpoints of two chords 
regardless of the signs. 
We remark that a crossing change on a welded knot diagram 
corresponds to the change of sign and orientation of the chord 
simultaneously. 
See the right of the figure.

\begin{figure}[htb]
\begin{center}
\includegraphics[bb=0 0 334 37]{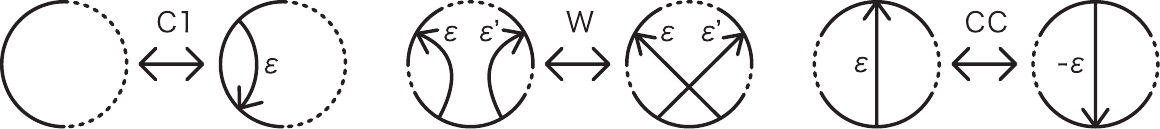}
\caption{}
\label{fig004}
\end{center}
\end{figure}

It is known that two welded knot diagrams $D$ and $D'$ 
define the same Gauss diagram 
$G(D)=G(D')$ if and only if $D$ and $D'$ are related by 
a finite sequence of virtual Reidemeister moves V1--V4 \cite{GPV}.

\begin{lemma}\label{lem21}
Let $x$ be a classical crossing of a welded knot diagram $D$. 
We divide $D$ into two closed paths by cutting $D$ at $c$. 
Suppose that one of the obtained paths contains no under-crossing except $x$. 
Let $E$ be the welded knot diagram obtained from $D$ 
by replacing $x$ with a welded crossing. 
Then $D$ is related to $E$ by a finite sequence of 
{\rm C1}, {\rm V1--V4}, and {\rm W}. 
\end{lemma}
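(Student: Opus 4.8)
The plan is to pass to Gauss diagrams and argue there. By the theorem of \cite{GPV} quoted above, two welded diagrams with the same associated Gauss diagram are related by a sequence of V1--V4; moreover, the moves C1 and W on Gauss diagrams are exactly the shadows of the moves C1 and W on welded diagrams (the latter possibly combined with some V1--V4 to reposition the rest of the diagram). So it suffices to transform $G(D)$ into $G(E)$ using only the Gauss-diagram moves C1 and W. Write $c$ for the chord of $G=G(D)$ coming from the crossing $x$, with initial endpoint $a$ (the over-crossing) and terminal endpoint $b$ (the under-crossing). Since $E$ has exactly the same classical crossings as $D$ apart from $x$, which is now welded, $G(E)$ is nothing but $G$ with the chord $c$ erased. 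Thus the goal is to make $c$ into a trivial chord by C1 and W moves and then delete it.

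First I would reformulate the hypothesis in terms of $G$. The endpoints $a$ and $b$ cut the circle of $G$ into two arcs, and these correspond (after reattaching the point $x$) to the two closed paths obtained by cutting $D$ at $x$. Saying that one of those paths, say the one meeting the arc $P$, has no under-crossing other than $x$ is the same as saying that no chord of $G$ has its terminal endpoint in the interior of $P$; equivalently, every endpoint lying in the interior of $P$ is an initial endpoint of some chord. If the interior of $P$ is empty then $a$ and $b$ are already adjacent, so $c$ is trivial and a single C1 finishes the proof; assume therefore that it is nonempty.

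Next I would slide the initial endpoint $a$ of $c$ along $P$ toward $b$. At each stage the endpoint immediately next to $a$ on the $P$-side lies in the interior of $P$ and hence, by the previous paragraph, is an initial endpoint; since the move W interchanges two adjacent initial endpoints regardless of signs (middle of Figure~\ref{fig004}), it may be applied to push $a$ one step closer to $b$. After finitely many such W moves $a$ becomes adjacent to $b$, so $c$ is a trivial chord and can be removed by a C1 move. Each W move used here only interchanges $a$ with another endpoint, so the cyclic order of the endpoints of all chords other than $c$ is unchanged; hence erasing $c$ at the end produces precisely $G$ with $c$ deleted, namely $G(E)$. Lifting this sequence of C1 and W moves on Gauss diagrams back to welded diagrams, and inserting V1--V4 where needed, yields the desired sequence of C1, V1--V4, and W from $D$ to $E$.

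The step that needs the most care is the reformulation in the second paragraph: one must check carefully, using the convention that a chord is oriented from the over-crossing to the under-crossing, that ``the closed path has no under-crossing except $x$'' translates exactly into ``every interior endpoint of $P$ is an initial endpoint,'' and one must keep track of the fact that the W moves never move any endpoint other than $a$. Granting this, the rest is a straightforward induction on the number of endpoints in the interior of $P$.
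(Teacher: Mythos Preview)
Your proof is correct and follows essentially the same approach as the paper's: both pass to Gauss diagrams, observe that the hypothesis forces one arc between the endpoints of the chord $c$ to contain only initial endpoints, use repeated W moves to slide the initial endpoint of $c$ across these until $c$ becomes trivial, and then delete it by C1. The paper's version is terser, but the argument and the sequence of moves are the same.
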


\begin{proof}
Let $\alpha$ be the closed path at $x$ on $D$ 
which contains no under-crossings. 
Since the path 
on the Gauss diagrams $G(D)$ corresponding to $\alpha$ 
contains no terminal endpoints of chords, 
we shrink the chord corresponding to $x$ to be trivial 
by applying W repeatedly. 
Then the chord is removed by C1 so that we obtain 
the Gauss diagram $G(E)$ of $E$. 
See Figure~\ref{fig005}. 
Therefore, $D$ and $E$ are related by C1 and W 
up to V1--V4. 
\end{proof}

\begin{figure}[htb]
\begin{center}
\includegraphics[bb=0 0 240 110]{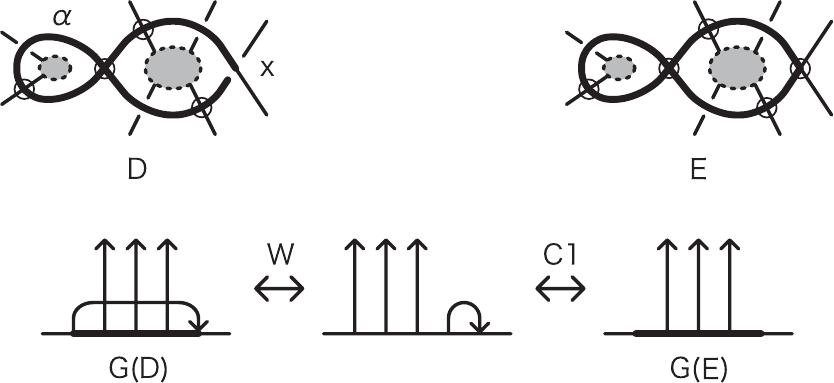}
\caption{}
\label{fig005}
\end{center}
\end{figure}

We say that a welded knot diagram $D$ is {\it descending} 
if there is a base point and an orientation of $D$ such that 
walking along $D$ from the base point 
with respect to the orientation, 
we meet the over-crossing for the first time and the under-crossing 
for the second time at every classical crossing.

\begin{proposition}\label{prop22} 
Any descending diagram $D$ is related to the trivial diagram 
by a finite sequence of {\rm C1}, {\rm V1--V4}, and {\rm W}. 
\end{proposition}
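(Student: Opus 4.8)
The plan is to argue by induction on the number $n$ of classical crossings of $D$, using Lemma~\ref{lem21} as the inductive engine and the cited characterization of V1--V4-equivalence via Gauss diagrams for the base case. For $n=0$, the diagram $D$ has no classical crossings, so its associated Gauss diagram $G(D)$ is a bare circle with no chords, which is also the Gauss diagram of the trivial diagram; hence $D$ is related to the trivial diagram by a sequence of V1--V4 moves alone, and in particular by C1, V1--V4, and W.

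For the inductive step, suppose $n\geq 1$ and fix a base point (on an edge, away from the crossings) and an orientation witnessing that $D$ is descending. First I would walk along $D$ from the base point and locate the first moment at which we pass \emph{under} at a classical crossing; call that crossing $x$. By the descending condition we have already passed \emph{over} $x$ strictly earlier, so cutting $D$ at the two preimages of $x$ yields two closed paths, one of which, $\alpha$, is the path traversed between the over-passage and the under-passage of $x$. The key observation is that no under-passage of any crossing occurs before the under-passage of $x$, by the choice of $x$; therefore every classical crossing met along $\alpha$ is met at its over-passage, i.e.\ $\alpha$ contains no under-crossing except $x$ itself. Lemma~\ref{lem21} then applies and shows that $D$ is related, by a finite sequence of C1, V1--V4, and W, to the diagram $E$ obtained from $D$ by replacing $x$ with a welded crossing.

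It remains to feed $E$ back into the induction. The diagram $E$ has $n-1$ classical crossings, namely those of $D$ other than $x$, with their over/under data unchanged, and the underlying immersed circle is unchanged as well; hence walking along $E$ from the same base point with the same orientation still meets the over-crossing before the under-crossing at each classical crossing of $E$, so $E$ is again descending. By the inductive hypothesis $E$, and therefore $D$, is related to the trivial diagram by C1, V1--V4, and W.

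I do not expect a real obstacle here: the only substantive claim is that the path $\alpha$ in the inductive step contains no under-crossing except $x$, and once the crossing $x$ with the earliest under-passage has been singled out this is immediate from the descending condition. The one point that deserves an explicit remark is the compatibility of the inductive hypothesis with the passage from $D$ to $E$ — that turning a crossing welded preserves the descending property — which holds because this modification is purely local and leaves the cyclic order of the remaining crossing events along the diagram untouched.
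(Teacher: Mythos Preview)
Your proof is correct and follows essentially the same approach as the paper: both arguments pick the classical crossing $x$ whose under-passage occurs first from the base point, invoke Lemma~\ref{lem21} to weld it, observe that the result is again descending, and iterate (the paper phrases the iteration informally as ``repeating this process'' rather than as a formal induction, and handles the base case identically via V1--V4).
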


\begin{proof}
Let $x$ be the classical crossing of $D$ 
such that $x$ is the first under-crossing 
while walking along $D$ from the base point 
according to the orientation of $D$. 
Since $x$ satisfies the condition in Lemma~\ref{lem21}, 
we can replace $x$ with a welded crossing 
by C1, V1--V4, and W. 
Since the obtained diagram is descending again, 
by repeating this process, 
$D$ is deformed into the diagram 
where all crossings are welded. 
Such a welded knot diagram is related to the trivial one 
by V1--V4. 
\end{proof}

\begin{proof}[Proof of {\rm Theorem~\ref{thm11}}]
By Proposition~\ref{prop22}, 
it is sufficient to perform crossing changes on $D$ 
so that the obtained diagram is descending. 
\end{proof}

Let ${\rm c}(K)$ denote the minimal number of classical crossings 
for all diagrams of a welded knot $K$.

\begin{lemma}\label{lem23}
Any non-trivial welded knot $K$ satisfies ${\rm c}(K)\geq 3$. 
\end{lemma}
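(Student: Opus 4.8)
The plan is to prove the contrapositive-flavored statement by a direct case analysis on the number of classical crossings: if $\mathrm{c}(K)\le 2$, then $K$ is trivial. Since $\mathrm{c}(K)=0$ means $K$ has a diagram with no classical crossings, and such a diagram is related to the trivial one by V1--V4 (as noted after the Gauss diagram discussion), the case $\mathrm{c}(K)=0$ is immediate. For $\mathrm{c}(K)=1$ and $\mathrm{c}(K)=2$, I would work on the level of Gauss diagrams, since by \cite{GPV} two diagrams have the same Gauss diagram iff they differ by V1--V4, so it suffices to show the Gauss diagram with one chord, respectively two chords, reduces to the empty Gauss diagram using the moves C1, C2, C3, W.

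For $\mathrm{c}(K)=1$: a Gauss diagram with a single chord has that chord's two endpoints appearing consecutively on the circle (there is nothing between them), so the chord is trivial and is removed by a single C1. Hence $\mathrm{c}(K)=1$ forces triviality, and in particular a genuinely nontrivial $K$ cannot have $\mathrm{c}(K)=1$.

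For $\mathrm{c}(K)=2$: here the main work lies. A Gauss diagram $G$ with two chords has four endpoints on the circle. I would enumerate the cyclic arrangements of the two chords up to symmetry: either the two chords are ``unlinked'' (the endpoints appear as $a\,a\,b\,b$ around the circle) or ``linked'' (they appear as $a\,b\,a\,b$). In the unlinked case, at least one of the chords has adjacent endpoints, so it is trivial and removed by C1, leaving a one-chord diagram handled by the previous case. In the linked case, I would use the move W to slide the initial endpoints of the two chords past each other; examining the possible positions of the two \emph{initial} endpoints relative to the circle, a single application of W (which swaps adjacent initial endpoints regardless of sign) converts the linked pattern $a\,b\,a\,b$ into a pattern where one chord becomes trivial, after which C1 finishes the reduction. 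One must check all sub-cases according to which endpoints of the two chords are the initial ones (there are four such assignments up to the relevant symmetry), but in each the combination of at most one W followed by C1 and then the one-chord case suffices. I expect the bookkeeping of these linked sub-cases — making sure that in every orientation/sign configuration a suitable W is available that creates an adjacency — to be the principal, though routine, obstacle; no C2 or C3 move should actually be needed.

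Putting these together, $\mathrm{c}(K)\le 2$ implies $K$ is trivial, so any non-trivial welded knot satisfies $\mathrm{c}(K)\ge 3$.
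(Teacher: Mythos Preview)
Your proposal is correct and takes essentially the same approach as the paper, which simply asserts that any Gauss diagram with at most two chords can be reduced to the empty one using C1 and W. Your case analysis fleshes out exactly this claim, and your key observation in the linked two-chord case---that in the cyclic pattern $abab$ the two initial endpoints are automatically adjacent, so a single W always applies---is precisely what makes the check routine.
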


\begin{proof}
It is not difficult to see that 
if a Gauss diagram has at most two chords, 
then the chords can be removed by C1 and W. 
\end{proof}

For a welded knot diagram $D$, 
we denote by $u(D)$ the minimal number of classical crossings of $D$ 
for which we perform the crossing changes to obtain a diagram 
presenting the trivial welded knot. 
The number $u(D)$ is well-defined by Theorem~\ref{thm11}. 
The {\it unknotting number} of a welded knot $K$ 
is the minimal number of $u(D)$ for all diagrams $D$ presenting $K$, 
and denoted by ${\rm u}(K)$. 
The following is a generalization of the well-known result for a classical knot 
(cf.~\cite{Oza}).

\begin{proposition}\label{prop24}
Any non-trivial welded knot $K$ satisfies 
$\displaystyle{{\rm u}(K)\leq \frac{{\rm c}(K)-1}{2}}$. 
\end{proposition}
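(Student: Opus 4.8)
The plan is to mimic the classical argument for the bound $\mathrm{u}(K)\le(\mathrm{c}(K)-1)/2$, adapting it to the welded setting using the tools established above. First I would take a diagram $D$ of $K$ realizing the minimal classical crossing number, so $D$ has exactly $c=\mathrm{c}(K)$ classical crossings, and by Lemma~\ref{lem23} we have $c\ge 3$. Choose a base point and an orientation of $D$ away from all crossings. Walking along $D$ from the base point, at each classical crossing we encounter the two strands in some order; call a classical crossing \emph{bad} if we meet the under-crossing strand first and the over-crossing strand second (i.e.\ the ``wrong'' order for a descending diagram), and \emph{good} otherwise. Performing a crossing change at every bad crossing produces a descending diagram, which by Proposition~\ref{prop22} presents the trivial welded knot; hence $u(D)$, and a fortiori $\mathrm{u}(K)$, is at most the number of bad crossings of $D$.

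The key combinatorial step is the standard double-counting trick: if $b$ is the number of bad crossings we may equally well \emph{start reading from the other direction}, or equivalently observe that reversing the roles of ``first'' and ``second'' interchanges good and bad crossings at each classical crossing; so with the opposite choice of base point traversal the number of bad crossings becomes $c-b$. Since both $b$ and $c-b$ give valid upper bounds for $u(D)$, we get $u(D)\le\min\{b,\,c-b\}\le \lfloor c/2\rfloor$. To sharpen $\lfloor c/2\rfloor$ to $(c-1)/2$ when $c$ is even, I would argue that $b=c/2$ cannot occur for a minimal diagram: if every choice of direction gave exactly $c/2$ bad crossings, one can slide the base point past one crossing at a time and track how $b$ changes by $\pm1$ or $0$ at each step, forcing the existence of a base point with $b\le c/2-1$; alternatively, one uses that a minimal-crossing diagram with $c$ even admitting no base point with fewer than $c/2$ bad crossings would contradict minimality via a reduction (a crossing that is good from \emph{both} adjacent base points can be eliminated). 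Either way one obtains $u(D)\le(c-1)/2$ for all parities, and since $\mathrm{u}(K)\le u(D)$ the proposition follows.

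The main obstacle I anticipate is exactly this last refinement from $\lfloor \mathrm{c}(K)/2\rfloor$ to $(\mathrm{c}(K)-1)/2$ in the even case; in the classical setting this is handled by a short but slightly delicate argument about how the count of bad crossings varies as the base point moves around the diagram, and I would need to check that nothing about welded crossings (which contribute no chords and hence no ``bad/good'' data) interferes with that count — they should be transparent, since Proposition~\ref{prop22} only cares about the classical crossings being in descending order. The rest of the argument is routine: it is purely the classical unknotting-number bound, with Proposition~\ref{prop22} playing the role that ``a descending classical diagram is the unknot'' plays classically, and Lemma~\ref{lem23} ensuring the denominator makes sense.
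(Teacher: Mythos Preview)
Your approach is the paper's in spirit---take a minimal diagram, count bad crossings for two opposite traversals, and take the smaller count---but the paper handles the ``$-1$'' refinement much more cleanly than either of your proposed fixes. Rather than sliding the base point or appealing to minimality, the paper fixes a classical crossing $x$ and places the two base points $p_1,p_2$ on the \emph{over}-arc at $x$, one on each side of $x$, with the two traversals running in opposite directions (from $p_i$ toward $p_j$). Then $x$ is the very first crossing encountered from each $p_i$, and in both cases one meets the over-strand first, so $x$ is good for \emph{both} traversals. Every other classical crossing is good for exactly one of the two traversals, so the bad sets $S_1,S_2$ are disjoint with $|S_1|+|S_2|=c(D)-1$, giving $u(D)\le\min(|S_1|,|S_2|)\le (c(D)-1)/2$ immediately, with no parity case split and no use of minimality beyond $c(D)=\mathrm{c}(K)$.

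For the record, your sliding idea can be made rigorous: as the base point moves forward past an over-point of a classical crossing $b$ increases by exactly $1$, and past an under-point it decreases by exactly $1$ (welded crossings contribute nothing). Since $c\ge 3$ there is at least one such step, so $b$ is not constant over all arcs; hence some arc has $b\ne c/2$, and on that arc either $b\le c/2-1$ or $c-b\le c/2-1$. Your second alternative---that a crossing good from both adjacent base points can be ``eliminated'' to contradict minimality---is not justified as stated (being good from both sides does not by itself produce a simplifying Reidemeister move), and in any case is unnecessary once you have the paper's base-point placement.
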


\begin{proof}
Let $D$ be a welded knot diagram of $K$ with $c(D)={\rm c}(K)$, 
and $x$ a classical crossing of $D$. 
By Lemma~\ref{lem23}, 
we have $c(D)\geq 3$. 

We choose a pair of points $p_1$ and $p_2$ 
on the over-path at $x$ from one side of $x$ to the other. 
Let $S_i$ $(i=1,2)$ be the set of classical crossings of $D$ 
such that we perform crossing changes at $S_i$ on $D$ 
to obtain the descending diagram with the base point $p_i$ 
and the the orientation from $p_i$ to $p_j$ $(j\ne i)$. 
Since $S_1\cap S_2=\emptyset$ and 
$|S_1|+|S_2|=c(D)-1$, 
it follows by Proposition~\ref{prop22} that 
$${\rm u}(K)\leq u(D)\leq \frac{c(D)-1}{2}=\frac{{\rm c}(K)-1}{2}.$$
\end{proof}


\section{Delta moves}\label{sec3}

\begin{proposition}\label{prop31}
Let $x$ be a classical crossing of a welded knot diagram $D$, 
and $E$ the welded knot diagram obtained from $D$ 
by the crossing change at $x$. 
Then $D$ and $E$ are related by a finite sequence of 
Reidemeister moves and Delta moves. 
\end{proposition}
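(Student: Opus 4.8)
The strategy is to reduce the statement to the local move W, which (by the discussion preceding Lemma~\ref{lem21}) is the welded Reidemeister move whose effect on a Gauss diagram is to swap the positions of two adjacent \emph{initial} (over-crossing) endpoints of chords. First I would recall from Section~\ref{sec2} that a crossing change at $x$ corresponds, on the Gauss diagram, to reversing both the sign and the orientation of the chord $c_x$ associated with $x$. Since Reidemeister moves of type V1--V4 allow us to move freely within a fixed Gauss diagram, and moves C1, C2, C3, W realize the listed Gauss-diagram operations, the whole problem becomes: show that on a Gauss diagram $G$, reversing the sign-and-orientation of a single chord $c_x$ can be achieved by a finite sequence of C1, C2, C3, W together with \emph{Delta moves}, read off as the appropriate Gauss-diagram move. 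So the first step is to determine what a Delta move does to a Gauss diagram: it should be a move involving three chords whose six endpoints are grouped into three adjacent pairs around the circle, changing the cyclic configuration of those endpoints in a prescribed way (the precise picture coming from Figure~\ref{fig001}); I would draw this Gauss-diagram incarnation explicitly.

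Next, the key reduction: to flip $c_x$, I would first use W (and C1/C2 to create auxiliary trivial chords if needed) to slide the initial endpoint of $c_x$ around the circle so that $c_x$ becomes, say, a short chord adjacent to a small controlled configuration, or equivalently to isolate $c_x$ together with a canceling pair of chords $c, c^{-1}$ produced by a C2 move. The plan is then to perform a Delta move on the triple $\{c_x, c, c^{-1}\}$ — this is the heart of the argument — to convert the trio into a new trio in which the "main" chord now has the opposite sign and orientation, after which a C2 move removes the canceling pair and we are left with $c_x$ reversed. In other words, a single crossing change on a classical chord, modulo two copies of a trivial chord, is exactly one Delta move. This mirrors the classical fact that a crossing change is a "Delta move with two trivial crossings added," but here it has to be verified at the level of Gauss diagrams using the welded moves (especially W) to arrange the endpoints correctly before and after.

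The main obstacle I anticipate is the bookkeeping of endpoint positions around the circle: to apply a Delta move one needs the six relevant endpoints to sit in three adjacent pairs in a very specific cyclic order, and an arbitrary chord $c_x$ in an arbitrary Gauss diagram will not be so placed. The real work is to show that moves W (sliding initial endpoints past each other) together with C1 and C2 (inserting/deleting trivial chords and canceling pairs) suffice to bring $c_x$ and a freshly inserted canceling pair into the required standard position, perform the Delta move, and then restore. I would handle this by a normalization lemma: any chord in a Gauss diagram can, after inserting a C2 pair nearby and applying W's, be placed so that the three chords occupy three consecutive pairs of endpoints with the canceling pair "parallel" and $c_x$ threaded through them as in the Delta picture. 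Once that normal form is reached, the Delta move does its job in one stroke, and translating everything back through V1--V4 to welded knot diagrams completes the proof. The sign and orientation bookkeeping in the normalization step — making sure the Delta move outputs precisely the sign-reversed, orientation-reversed chord and not some other variant — is where I would be most careful.
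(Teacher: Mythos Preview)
Your plan diverges sharply from the paper's argument, and the step you identify as ``the heart of the argument'' does not work. A Delta move, read on a Gauss diagram, does \emph{not} change the sign or the orientation of any of the three chords involved; it only swaps the positions of the endpoints within each of the three adjacent pairs. So after performing a Delta move on a triple $\{c_x,c,c^{-1}\}$ you still have chords with exactly the same signs and the same initial/terminal markings as before; no chord has been ``flipped''. In particular there is no way to read off a sign-reversed, orientation-reversed copy of $c_x$ afterwards, and the two remaining chords cannot form a C2-cancelable pair with the needed sign pattern. (Classically this is reflected in Okada's result \cite{Oka} that a single Delta move changes the second Conway coefficient by exactly $\pm 1$, whereas one crossing change may alter it by any integer; so ``crossing change $=$ one Delta move plus two trivial crossings'' is already false for classical knots.) The normalization step also runs into trouble: a C2 pair has its four endpoints sitting in \emph{two} adjacent clusters, so together with the two endpoints of $c_x$ you cannot reach the Delta configuration of three separated adjacent pairs using only W (which slides initial endpoints past other initial endpoints and never moves a terminal endpoint).

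The paper's proof is of a completely different nature and does not attempt a local Gauss-diagram identity at all. It rewrites $D$ as a band sum of $E$ with a Hopf link $H$ (a single C2 at $x$), slides the band feet together along $E$, and then invokes the Taniyama--Yasuhara lemma (Lemma~\ref{lem32}): crossing changes between bands, and banded C1 moves, are realized by classical Reidemeister moves and Delta moves once $H$ is slid nearby. This lets one make the pair of bands descending. Only at this point does the welded move W enter, via the technique of Lemma~\ref{lem21} applied to the band cores, to convert the classical band-intersections to welded ones and finally unlink $H$; the bands are then absorbed by C2/V2 and one is left with $E$. The Delta moves in this argument act on the band/Hopf-link scaffolding, not directly on the crossing $x$, and there is no claim that a bounded number of them suffices.
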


\begin{proof}
Fix an orientation of $D$. 
Then $D$ is regarded as a band sum of 
a positive or negative Hopf link diagram $H$ 
as shown in Figure~\ref{fig006}. 
This replacement is realized by C2. 
We remark that each band is on the left hand side of 
the attaching arc on $E$.

\begin{figure}[htb]
\begin{center}
\includegraphics[bb=0 0 264 49]{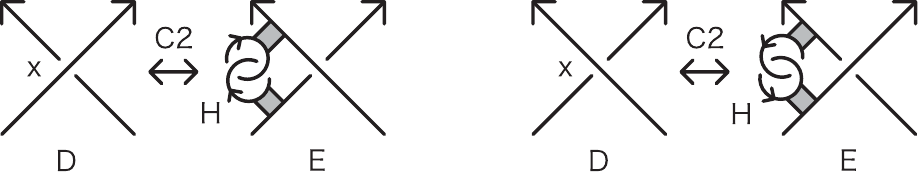}
\caption{}
\label{fig006}
\end{center}
\end{figure}

We slide the attaching arcs of the bands along $E$ 
so that they are adjacent to each other on $E$. 
In the top row of Figure~\ref{fig007}, 
an attaching arc passes a crossing of $E$ 
with making a pair of classical or welded crossings. 
In the bottom of the figure, 
an attaching arc passes an intersection of $E$ and a band 
with making a quadruple of classical or welded crossings. 
The deformations are accomplished by C2 and V2.

\begin{figure}[htb]
\begin{center}
\includegraphics[bb=0 0 325 91]{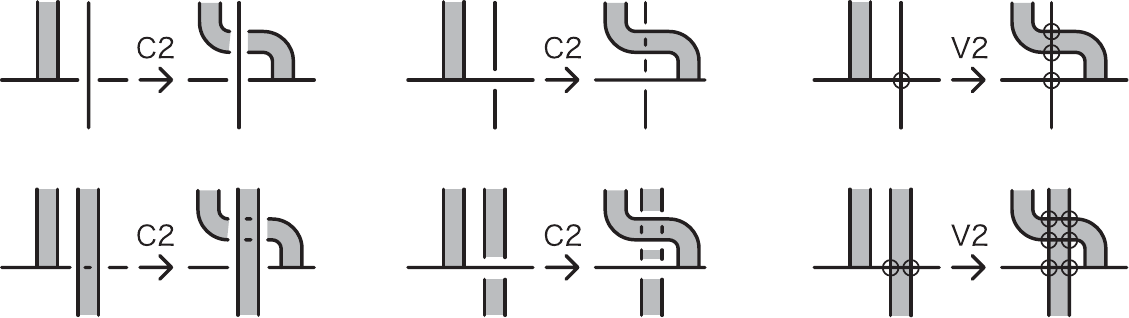}
\caption{}
\label{fig007}
\end{center}
\end{figure}

The Hopf link diagram $H$ can slide 
anywhere between the bands. 
In Figure~\ref{fig008}, 
$H$ passes an intersection of $E$ and a band 
by performing C2, C3, V2, and V4. 
The case that $H$ passes an intersection between bands 
is similarly proved by replacing the horizontal segment of $E$ in the figure 
with a band.

\begin{figure}[htb]
\begin{center}
\includegraphics[bb=0 0 334 46]{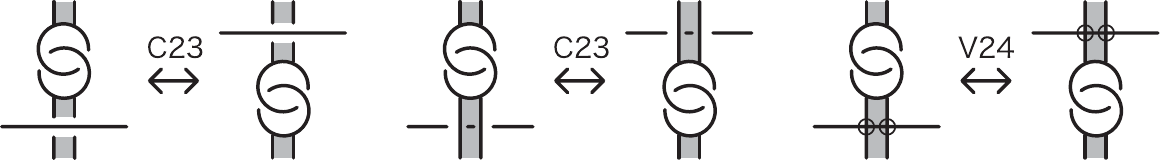}
\caption{}
\label{fig008}
\end{center}
\end{figure}

A {\it banded} Reidemeister move is a local deformation 
obtained from an original Reidemeister move as in Figure~\ref{fig003} 
by replacing some strings with bands. 
It is easy to see that any banded Reidemeister move except C1 is 
accomplished by some original Reidemeister moves. 
To proceed with the proof of Proposition~\ref{prop31}, 
we prepare the following classical deformations.

\begin{lemma}[\cite{TY}]\label{lem32}
For a band sum of $E$ and $H$, we have the following. 
\begin{itemize}
\item[{\rm (i)}] 
A banded Reidemeister move {\rm C1} is realized by 
classical Reidemeister moves and a Delta move 
up to a slide of $H$. 
\item[{\rm (ii)}] 
A crossing change between $E$ and a band 
or between bands is realized by classical Reidemeister moves and 
a Delta move up to a slide of $H$. 
\end{itemize}
\end{lemma}

\begin{proof}
We slide $H$ near the portion where the modification will be applied. 
Then a banded C1 move and a crossing change between $E$ and a band 
are accomplished by classical Reidemeister moves and Delta moves 
as shown in Figures~\ref{fig009}. 
A crossing change between bands 
is similarly proved by duplicating the horizontal segment of $E$ in the figure. 
\end{proof}

\begin{figure}[htb]
\begin{center}
\includegraphics[bb=0 0 321 118]{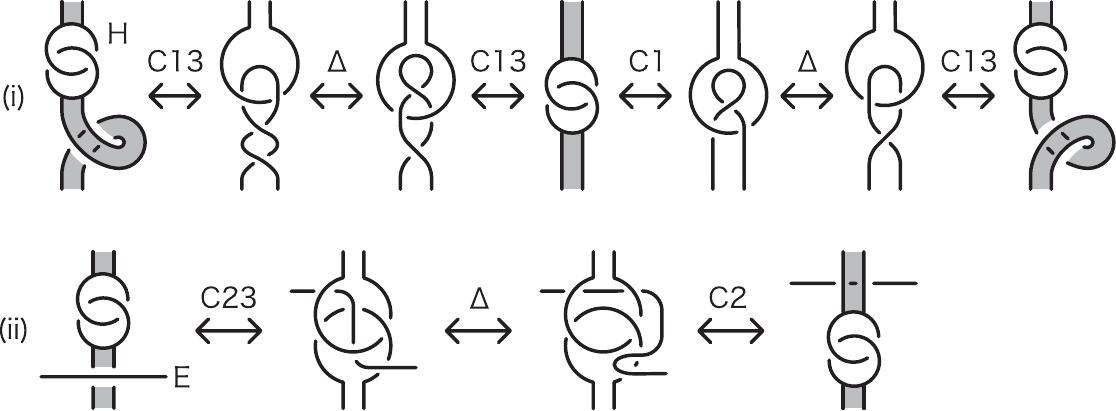}
\caption{}
\label{fig009}
\end{center}
\end{figure}

Now we continue the proof. 
By Lemma~\ref{lem32}(ii), 
we perform crossing changes so that the union of two bands are descending, 
that is, 
walking from one attaching arc on $E$ to the other, we meet 
\begin{itemize}
\item[(i)] 
a pair of over-crossings at every classical intersection between $E$ and a band, and 
\item[(ii)] 
a quadruple of over-crossings for the first time 
and a quadruple of under-crossings for the second time 
at every classical intersection between bands. 
\end{itemize}
See the left of Figure~\ref{fig010}.

\begin{figure}[htb]
\begin{center}
\includegraphics[bb=0 0 347 65]{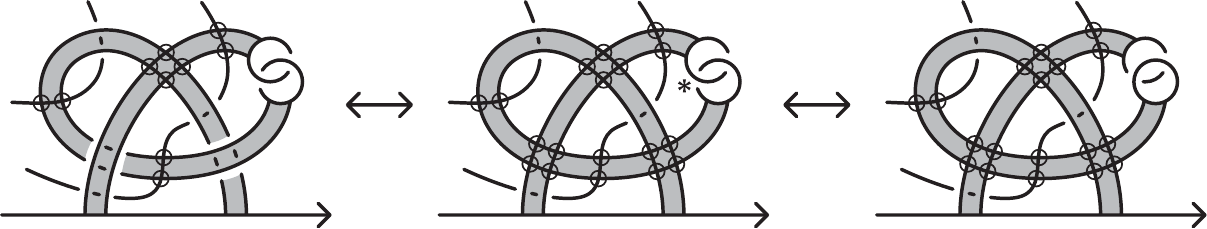}
\caption{}
\label{fig010}
\end{center}
\end{figure}

We use the same technique as in the proof of Lemma~\ref{lem21} 
for the cores of the bands. 
By performing banded Reidemeister moves C1 and W by Lemma~\ref{lem32}(i), 
all classical intersections between bands are replaced with welded ones 
as shown in the middle of the figure. 
Since one of the classical crossing of $H$ 
satisfies the condition in Lemma~\ref{lem21}, 
we perform a welding of the crossing and the inverse of another 
so that $H$ becomes unlinked. 
See the right of the figure. 
Since the bands are removed by C2 and V2, 
$D$ is equivalent to $E$ up to Delta moves. 
This completes the proof of 
Proposition~\ref{prop31}. 
\end{proof}

\begin{proof}[Proof of {\rm Theorem~\ref{thm12}}]
By Proposition~\ref{prop22}, 
it is sufficient to perform crossing changes on $D$ 
so that the obtained diagram is descending. 
Such crossing changes are accomplished 
by Reidemeister moves and Delta moves 
by Proposition~\ref{prop31}. 
\end{proof}

In classical knot theory, 
a single Delta move necessarily changes the knot type \cite{Oka}. 
On the other hand, there is a pair of diagrams 
of the same welded knot which are related by a single Delta move. 
The two diagrams as shown in Figure~\ref{fig011} 
present the trivial welded knot.

\begin{figure}[htb]
\begin{center}
\includegraphics[bb=0 0 260 55]{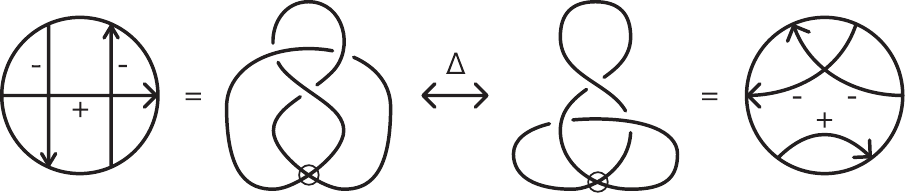}
\caption{}
\label{fig011}
\end{center}
\end{figure}

By a similar argument to the proof of Proposition~\ref{prop31}, 
any welded {\it link} diagram is transformed into the one such that  
all the self-crossings of the same component are welded 
by a finite sequence of Reidemeister moves and Delta moves. 
It is known in \cite{MN} that two classical links are related by a finite sequence of 
classical Reidemeister moves and Delta moves if and only if 
their pairwise linking numbers coincide.

\begin{question}\label{qst33}
Can we detect whether two welded links 
are related by Reidemeister moves and Delta moves 
by some algebraic invariants?
\end{question}


\section{Sharp moves}\label{sec4}

\begin{proposition}\label{prop41}
Let $x$ be a classical crossing of a welded knot diagram $D$, 
and $E$ the welded knot diagram obtained from $D$ 
by the crossing change at $x$. 
Then $D$ and $E$ are related by a finite sequence of 
Reidemeister moves and sharp moves. 
\end{proposition}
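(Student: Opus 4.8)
The plan is to follow the proof of Proposition~\ref{prop31} almost verbatim, replacing the classical Delta move by the sharp move throughout, and to isolate the single place where a new classical picture is required. First I would fix an orientation of $D$ and, exactly as in the proof of Proposition~\ref{prop31}, rewrite $D$ by a C2 move as a band sum of a positive or negative Hopf link diagram $H$, with each band lying on the left of its attaching arc on $E$ (cf.~Figure~\ref{fig006}). Then, using only C2, C3, V2, and V4, I would slide the two attaching arcs along $E$ until they are adjacent and slide $H$ to a convenient position between the bands; these steps are of the same "virtual/classical Reidemeister" nature as those in Figures~\ref{fig007} and \ref{fig008} and carry over without change.

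The key step is to establish a sharp-move analog of Lemma~\ref{lem32}: \emph{for a band sum of $E$ and $H$, (i) a banded Reidemeister move {\rm C1}, and (ii) a crossing change between $E$ and a band or between two bands, are each realized by classical Reidemeister moves and a single sharp move, up to a slide of $H$.} As in the Delta case this should follow either in the spirit of \cite{TY} or by a direct local argument: slide $H$ next to the spot to be modified and exhibit the sharp move explicitly. The reason a sharp move suffices for (ii) is that a band is a doubled strand, so a crossing change "through a band" is the simultaneous change of the two companion boundary crossings, i.e.\ a doubled crossing change, which is exactly the sort of net change one can produce by the local tangle replacement defining the sharp move (being a classical move, it lives entirely inside the band region and never interacts with any welded crossing).

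I expect case (i) — realizing a banded {\rm C1} by a sharp move — to be the main obstacle, since the sharp move has a larger and more rigid local footprint than a Delta move, so more care is needed to fit it (together with the accompanying classical Reidemeister moves) into the banded {\rm C1} picture while keeping all newly created crossings classical.

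Granting this lemma, the remainder is a transcription of the end of the proof of Proposition~\ref{prop31}. Using part (ii) I would perform crossing changes so that the union of the two bands becomes descending: a pair of over-crossings at every classical $E$–band intersection, and a quadruple of over-crossings followed by a quadruple of under-crossings at every classical band–band intersection (cf.~the left of Figure~\ref{fig010}). Then, applying the technique of Lemma~\ref{lem21} to the cores of the bands by banded {\rm C1} and {\rm W} moves — the latter permitted by part (i) of the lemma — I would replace every classical band–band crossing with a welded one. Since one classical crossing of $H$ then satisfies the hypothesis of Lemma~\ref{lem21}, welding that crossing and inverse-welding the other unlinks $H$, after which the bands are removed by C2 and V2, leaving $E$. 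Hence $D$ and $E$ are related by a finite sequence of Reidemeister moves and sharp moves, which completes the proof of Proposition~\ref{prop41}.
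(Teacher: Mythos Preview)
Your global strategy is exactly the paper's: the proof of Proposition~\ref{prop41} literally says that ``the story of the proof is exactly the same as that of Proposition~\ref{prop31}'' and reduces everything to a sharp-move analog of Lemma~\ref{lem32}, which the paper states as Lemma~\ref{lem42}. Your transcription of the remainder of the argument (descending bands, banded C1 and W on the cores, welding one crossing of $H$, removing the bands) is correct and matches the paper.

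The gap is in your treatment of the key lemma itself. Your heuristic that ``a crossing change through a band is a doubled crossing change, which is exactly the sort of net change one can produce by a sharp move'' is not accurate: a sharp move changes four crossings in a specific alternating pattern, not two, and an $E$--band crossing change alters only two crossings while a band--band crossing change alters four in the pattern of a \emph{pass} move, not a sharp move. The paper does not attempt a bare-hands reduction here; instead it invokes four auxiliary classical moves---the pass move, the $t_4$ move, the $\overline{t_4}$ move, and the $\Gamma$ move---and cites the literature (\cite{MN,NN,Kau1,Prz}) for the facts that each of these is realized by classical Reidemeister moves together with sharp moves. With these in hand, the banded C1 and the $E$--band crossing change are dispatched by an explicit figure (Figure~\ref{fig013}), and the band--band crossing change is observed to be precisely a pass move. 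So the missing ingredient in your proposal is this intermediate layer of known equivalences; once you insert it, your outline becomes the paper's proof. Note also that the lemma should say ``sharp moves'' rather than ``a single sharp move'', since e.g.\ realizing a pass move via \cite{MN} may require several.
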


\begin{proof}
The story of the proof is exactly the same as that of Proposition~\ref{prop31}. 
It is sufficient to prove the following 
analogous to Lemma~\ref{lem32}. 
\end{proof}

\begin{lemma}\label{lem42}
For a band sum of $E$ and $H$, 
we have the following. 
\begin{itemize}
\item[{\rm (i)}] 
A banded Reidemeister move {\rm C1} is realized by 
classical Reidemeister moves and sharp moves 
up to a slide of $H$. 
\item[{\rm (ii)}] 
A crossing change between $E$ and a band 
or between bands is realized by classical Reidemeister moves 
sharp move up to a slide of $H$. 
\end{itemize}
\end{lemma}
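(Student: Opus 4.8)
The plan is to mimic the structure of Lemma~\ref{lem32}, replacing each Delta move with a sharp move.  Recall that a sharp move (the $\sharp$-move of Murakami--Nakanishi) can be described as the simultaneous crossing change of two antiparallel pairs of strands, and it is an unknotting operation for classical knots.  The key algebraic fact is the classical identity that a crossing change between two strands can be realized by a sharp move at the cost of introducing a pair of clasps (or, dually, that a sharp move equals a product of crossing changes); since both claims in the lemma are purely \emph{classical} deformations -- they take place after the Hopf link $H$ has been slid out of the way -- everything reduces to known classical identities from \cite{MN} together with an explicit local picture.

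First I would prove part (ii).  As in the proof of Lemma~\ref{lem32}, slide $H$ adjacent to the region where the crossing change between $E$ and a band (or between two bands) is to be performed, so that two parallel copies of the Hopf-linked configuration are locally available.  A sharp move involves two oppositely oriented strand pairs; I would exhibit a local diagram in which one of these pairs is supplied by $E$ and the other by a finger of a band pushed alongside $H$, so that firing the sharp move there effects precisely the desired crossing change on the band, with the extra crossings absorbed by classical Reidemeister moves and a return slide of $H$.  The crossing change between two bands is then obtained, exactly as before, by duplicating the horizontal segment of $E$ in the figure (i.e.\ running the same local move with a band in place of a strand of $E$), since a banded sharp move involving only bands reduces to the strand version after doubling.

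Next I would deduce part (i) from part (ii).  A banded C1 move creates or deletes a banded kink; by pushing $H$ near the kink, the kink can be undone using one classical C1 move together with a single crossing change between the band and itself, and that crossing change is handled by part (ii).  Concretely I would draw the same figure as Figure~\ref{fig009} but with the Delta tangle replaced by the sharp tangle, checking that the boundary data match and that only C1--C3, V-moves, and one sharp move are used.

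The main obstacle I expect is purely diagrammatic bookkeeping: a sharp move acts on four strands with a prescribed pattern of orientations, whereas a Delta move acts on three, so the local pictures from \cite{TY} do not transfer verbatim.  I would need to be careful that the orientations of the bands (recall each band lies on the left of its attaching arc on $E$) are compatible with the antiparallel pairs required by the sharp move, possibly inserting curls via C1 to correct orientations before applying the move.  Once the correct local model is fixed, the verification is routine, and the rest of the argument -- feeding Lemma~\ref{lem42} into the proof of Proposition~\ref{prop41} and then into Theorem~\ref{thm13} via Proposition~\ref{prop22} -- is identical to the Delta-move case.
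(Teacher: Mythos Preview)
Your outline is in the right spirit but misses the device that makes the paper's proof short.  Rather than constructing ad hoc sharp-move realizations from scratch, the paper interposes four named classical moves --- the pass move, the $t_4$ move, the $\overline{t_4}$ move, and the $\Gamma$ move --- and quotes the literature (\cite{MN,NN,Kau1,Prz}) for the fact that each of these is generated by classical Reidemeister moves and sharp moves.  After sliding $H$ into position, the banded C1 move and the crossing change between $E$ and a band are then displayed (Figure~\ref{fig013}) as compositions of Reidemeister moves with these auxiliary moves, hence with sharp moves.  The crossing change between two bands is handled by the one-line observation that it \emph{is} a pass move: two parallel strands over two parallel strands, with all four crossings switched.

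Your plan to prove (ii) first and then derive (i) from it is plausible, but the step ``firing the sharp move there effects precisely the desired crossing change on the band, with the extra crossings absorbed by classical Reidemeister moves'' is exactly where the work lies, and you have not said which configuration of four strands you use or why the two surplus crossing changes cancel.  Routing this through the pass/$t_4$/$\overline{t_4}$/$\Gamma$ moves is what makes that bookkeeping automatic, and in particular it removes the orientation worries you flag at the end, since those moves come in all the orientation flavors you need and their sharp-equivalence is already on record.
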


\begin{proof}
We  consider four kinds of local moves, 
called a {\it pass move} \cite{Erl, Kau1}, a {\it $t_4$ move},  
a {\it $\overline{t_4}$ move} \cite{Prz}, and a {\it $\Gamma$ move} \cite{Kau1, Kau2} 
as shown in Figure~\ref{fig012}. 
It is well-known that 
\begin{itemize}
\item
a pass move is realized by classical Reidemeister moves and sharp moves \cite{MN}, 
\item
a $t_4$ move is realized by classical Reidemeister moves and a sharp move (cf.~\cite{NN}),  
\item
a $\overline{t_4}$ move is realized by classical Reidemeister moves and a pass move 
(and hence, sharp moves), and 
\item
a $\Gamma$ move is realized by classical Reidemeister moves and a pass move 
(and hence, sharp moves) \cite{Kau1}. 
\end{itemize}

\begin{figure}[htb]
\begin{center}
\includegraphics[bb=0 0 349 37]{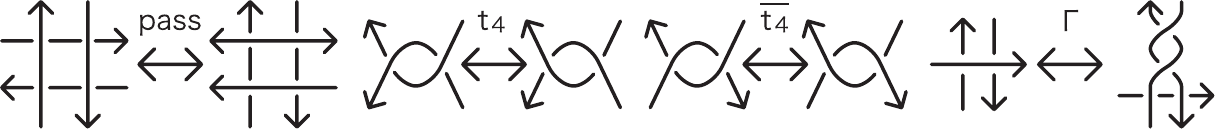}
\caption{}
\label{fig012}
\end{center}
\end{figure}

Now we slide $H$ near the the portion 
where the modification will be applied. 
Then a banded C1 move and a crossing change between 
$E$ and a band are accomplished 
by classical Reidemeister moves and sharp moves 
as shown in Figure~\ref{fig013}. 
A crossing change between bands is 
exactly the same as a pass move. 
\end{proof}

\begin{figure}[htb]
\begin{center}
\includegraphics[bb=0 0 272 109]{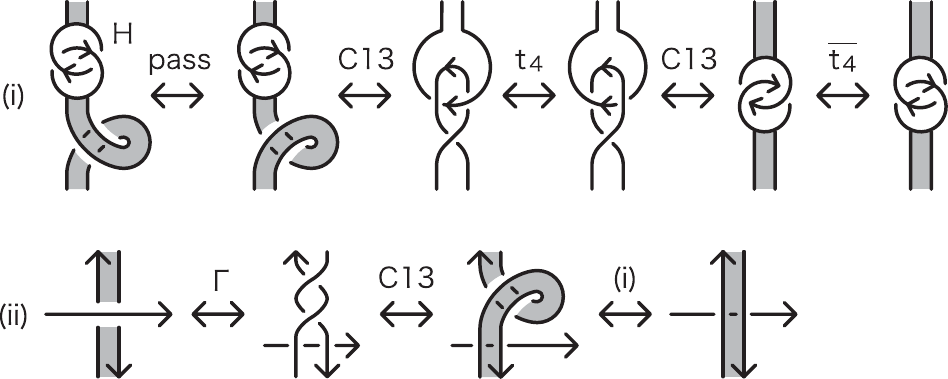}
\caption{}
\label{fig013}
\end{center}
\end{figure}

\begin{proof}[Proof of {\rm Theorem~\ref{thm13}}]
By Proposition~\ref{prop22}, 
it is sufficient to perform crossing changes on $D$ 
so that the obtained diagram is descending. 
Such crossing changes are accomplished 
by Reidemeister moves and sharp moves 
by Proposition~\ref{prop41}. 
\end{proof}

In classical knot theory, 
a single sharp move necessarily changes the knot type \cite{Mur}. 
On the other hand, there is a pair of diagrams 
of the same welded knot which are related by a single sharp move. 
The two diagrams as shown in Figure~\ref{fig014} 
present the trivial welded knot.

\begin{figure}[htb]
\begin{center}
\includegraphics[bb=0 0 298 46]{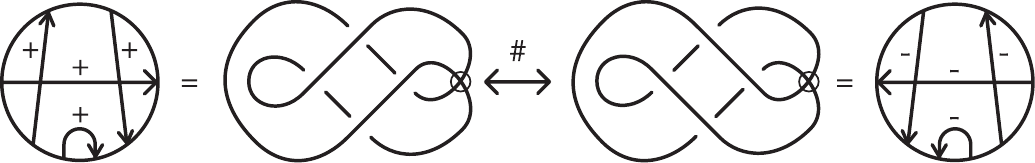}
\caption{}
\label{fig014}
\end{center}
\end{figure}

By a similar argument to the proof of Proposition~\ref{prop41}, 
any welded {\it link} diagram is transformed into the one such that  
all the self-crossings of the same component are welded 
by a finite sequence of Reidemeister moves and sharp moves. 
The necessary and sufficient condition are known 
for classical links to be related by classical Reidemeister moves 
and sharp moves in terms of their linking numbers \cite{MN}.

\begin{question}\label{qst43}
Can we detect whether two welded links 
are related by Reidemeister moves and sharp moves 
by some algebraic invariants?
\end{question}

It is known that any classical knot is equivalent to 
the trivial knot or trefoil knot up to pass moves \cite{Kau1}.

\begin{question}\label{qst44}
Is the set of equivalence classes of welded knots 
up to pass moves finite, or infinite?
\end{question}

\section*{Achknowledgments}
The author is partially supported by 
JSPS KAKENHI Grant Number 25400090.
He would like to thank Professors Takuji Nakamura and Yasutaka Nakanishi 
for helpful suggestions.


\end{document}